\documentclass{amsart}
\usepackage{amsmath, amsthm, amsfonts, mathrsfs}
\usepackage[
pdftitle={..:: Group gradings on upper block triangular matrices ::..},
pdfdisplaydoctitle=true,
hidelinks,
]{hyperref}

\newcounter{contador}
\newtheorem{Thm}[contador]{Theorem}

\newtheorem{Cor}[contador]{Corollary}
\newtheorem{Lemma}[contador]{Lemma}
\theoremstyle{definition}

\theoremstyle{remark}



\title[Group gradings on upper block triangular matrices]{Group gradings on upper block triangular matrices\footnote{This is a post-peer-review, pre-copyedit version of an article published in Archiv der Mathematik. The final authenticated version is available online at: http://dx.doi.org/10.1007/s00013-017-1134-0}}
\author{Felipe Yukihide Yasumura}
\address{
Department of Mathematics, State University of Maring\'a, Maring\'a, PR, Brazil.}
\email{felipeyukihide@gmail.com}
\thanks{This work was supported by Fapesp, grant no. 2013/22.802-1 and grant no. 2017/11.018-9}
\subjclass{16W50}
\keywords{Graded algebras, Upper block-triangular matrices, associative algebras}

\begin{document}
\begin{abstract}
	It was proved by Valenti and Zaicev, in 2011, that, if $G$ is an abelian group and $K$ is an algebraically closed field of characteristic zero, then any $G$-grading on the algebra of upper block triangular matrices over $K$ is isomorphic to a tensor product $M_n(K)\otimes UT(n_1,n_2,\ldots,n_d)$, where $UT(n_1,n_2,\ldots,n_d)$ is endowed with an elementary grading and $M_n(K)$ is provided with a division grading.

	In this manuscript, we prove the validity of the same result for a non necessarily commutative group and over an adequate field (characteristic either zero or large enough), not necessarily algebraically closed.
\end{abstract}

\maketitle

\section{Introduction}
Algebras with additional structure are deeply studied nowadays, and in particular, the graded algebras was intensily investigated mainly after the works of Kemer \cite{kemer}, showing the importance of $\mathbb{Z}_2$-graded algebras in the study of algebras with polynomial identities. These algebras constitutes a natural generalization of polynomial algebras in the commutative case. They are also related with supersymmetries in Physics. An interesting question concerning gradings on algebras is classifying all possible gradings on a given algebra. For simple associative, Lie and Jordan algebras, the classification is essentially complete (see the book \cite{EldKoch} for a complete reference in the subject). There exists many other algebras whose gradings was computed or partially computed.

In this manuscript, we are interested in studying a non-simple algebra, namely the upper block triangular matrices. These algebras are defined in the following way. Let $n_1,n_2,\ldots,n_t\in\mathbb{N}$ be any integers, then set
$$
UT(n_1,n_2,\ldots,n_t)=\left(\begin{array}{cccc}A_{11}&A_{12}&\ldots&A_{1t}\\0&A_{22}&\ldots&A_{2t}\\\vdots&\ddots&\ddots&\vdots\\0&\ldots&0&A_{tt}\end{array}\right),
$$
where each $A_{ij}$, for $1\le i\le j\le t$, is a $n_i\times n_j$ matrix with entries in the field $K$. The Jacobson Radical $J$ of $UT(n_1,n_2,\ldots,n_t)$ is the set of all elements such that all $A_{ii}$ are zero, for $i=1,2\ldots,t$. The upper triangular matrices is a particular case of upper block triangular matrices, if we consider $UT(1,1,\ldots,1)$. The matrix algebras can also be obtained if we put $t=1$.

In 2003, Valenti and Zaicev proved that any group grading on the algebra of upper triangular matrices over an algebraically closed field of characteristic zero, where the grading group is abelian, is elementary, up to a graded isomorphism \cite{VZ2003}. In 2007, the same authors proved the same theorem, but for arbitrary field and any group \cite{VZ2007}; and in the same paper the authors conjectured the classification of the group gradings over the algebra of upper block triangular matrices. But in 2011, Valenti and Zaicev solved this question, proving the validity of their conjecture for an algebraically closed field of characteristic zero and the grading group commutative and finite \cite{VZ2011}.

Following the sequence, in this manuscript, we describe the group gradings on the upper block triangular matrices, proving the conjecture of Valenti and Zaicev for arbitrary field of characteristic zero (or the characteristic greater than the dimension of the algebra) and a group not necessarily commutative, nor finite.

We recall that the upper block triangular matrices, in the ungraded sense, are related to the so called minimal varieties (see \cite{GS2005} and the references therein). The classification of the elementary gradings on the upper block triangular matrices was studied in \cite{BD2013}. The graded polynomial identities for the elementary gradings on the upper block triangular matrices was dealt in \cite{CM2014,MS2016}. Also, in \cite{VinSpi2014} the authors addressed the question of when the knowledge of the graded polynomial identities for a certain grading on the upper block triangular matrices completely determines the grading.

\section{Notations and preliminaries}
We fix a group $G$ with multiplicative notation and an arbitrary field $K$.

\paragraph{Graded algebras.} Let $A$ be any algebra (associative or not) and $G$ any group. We say that $A$ is a \textit{$G$-graded algebra} (or $A$ is equiped with a $G$-grading) if there exists a vector space decomposition $A=\bigoplus_{g\in G}A_g$ (where some of the $A_g$ can be zero) satisfying $A_gA_h\subset A_{gh}$ for all $g,h\in G$. We call the elements in $\cup_{g\in G} A_g$ homogeneous, and we say that $x$ has degree $g$ if $x\in A_g$, denoted $\deg x=g$. A \textit{graded division algebra} is a graded algebra $A$ such that every non-zero homogeneous element of $A$ has an inverse in $A$.

\paragraph{Gradings on matrix algebras.} We say that a $G$-grading on $M_n$ is \emph{elementary} if there exists a sequence $(g_1,g_2,\ldots,g_n)\in G^n$ such that every matrix unit $e_{ij}\in U$ is homogeneous of degree $g_ig_j^{-1}$. If $B$ is another $G$-graded algebra, then we can furnish a $G$-grading on $M_n\otimes_K B$ if we put
\begin{equation}\label{eq_degree}
\deg e_{ij}\otimes b=g_i\deg b g_j^{-1},
\end{equation}
for all homogeneous $b\in B$.

We canonically identify $M_n\otimes M_m=M_{nm}$ via Kronecker product. It is well known that the graded version of the Density Theorem holds valid. That is, given $M_n$ endowed with a $G$-grading, we can find $M_r(K)$ equipped with an elementary grading given by a sequence $\mu$, and a graded division algebra $D=M_s(K)$ such that $M_n\cong M_r\otimes D$, where the grading on the tensor product is given by \eqref{eq_degree}. In this case, we denote such grading by $(M_n,D,\mu)$.

\paragraph{Notations: upper block-triangular matrices.} Denote by $J$ the Jacobson Radical of $U=UT(n_1,n_2,\ldots,n_t)$. Denote also by $M_{ij}$ the block of matrices, so that we can write (as vector spaces) $U=\bigoplus_{1\le i\le j\le t}M_{ij}$. Thus, in this notation $J=\bigoplus_{i<j}M_{ij}$. Note that each $M_{ii}$ is isomorphic to the $n_i\times n_i$ matrix algebra, and we can see $M_{ii}$ as a subalgebra of $U$. Let $E_i\in M_{ii}$ be its identity matrix.

It may happen that $U$ is a graded subalgebra of some $(M_n,D,\mu)$. This will happen if and only if $r$ divides each $n_i$, where $D$ consists of $r\times r$ matrices. We denote by $(U,D,\mu)$ the $G$-grading on $U$ induced from $(M_n,D,\mu)$.

\paragraph{Valenti-Zaicev Conjecture.} In \cite{VZ2007}, Valenti and Zaicev conjectured that every grading on $U$ is graded isomorphic to $UT(n_1',n_2',\ldots,n_t')\otimes M_n(K)$, where $M_n(K)$ is provided with a division grading and $UT(n_1',n_2',\ldots,n_t')$ is endowed with an elementary grading. This was proved to be true, if the base field is algebraically closed of characteristic zero and the group is finite and abelian \cite{VZ2011}.

\paragraph{Graded modules.} Let $A$ be a $G$-graded algebra and $V$ a vector space that is an $A$-module. Suppose that we have a decomposition $V=\bigoplus_{g\in G}V_g$ into subspaces (in this case, $V$ has a vector space grading). We say that $V$ is a \textit{graded $A$-module} if $V_gA_h\subset V_{gh}$, for all $g,h\in G$.

If $V=\bigoplus_{g\in G}V_g$ is a graded vector space, and given $h\in G$, we define $V^{[h]}$ as the graded vector space with decomposition $V^{[h]}=\bigoplus_{g\in G}V_g^{[h]}$, where $V_{gh}^{[h]}=V_{g}$. Similarly we define the graded vector space ${}^{[h]}\!V$. Note that if $A$ is a $G$-graded algebra, then $A$ itself is a $G$-graded $A$-module.

For the special case where $D$ is a $G$-graded division algebra, the structure of $D$-modules are well known (see, for instance, \cite[Chapter 2, page 29]{EldKoch}). If $V$ is a $G$-graded $D$-module, then $V=\bigoplus V_i$, where each $V_i={}^{[g_i]}\!D$, for some $g_i\in G$. In other words, every graded $D$-module is free.

\section{Group gradings on the upper block triangular matrices}

We start proving that some subspaces are graded:
\begin{Lemma}
	If $J$ is graded, then all $M_{ij}$ are graded subspaces.
\end{Lemma}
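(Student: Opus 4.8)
The plan is to reduce the statement to the claim that each diagonal idempotent $E_i$ is homogeneous of degree $e$, where $e$ denotes the neutral element of $G$. Indeed, $M_{ij}=E_iUE_j$ for $i\le j$, and if each $E_i$ is homogeneous of degree $e$, then for homogeneous $u\in U_g$ we get $\deg(E_iuE_j)=e\cdot g\cdot e=g$, so $E_iU_gE_j\subseteq M_{ij}\cap U_g$; conversely every $x\in M_{ij}\cap U_g$ satisfies $x=E_ixE_j$, whence $M_{ij}=\bigoplus_g E_iU_gE_j$ is graded. (Recall also that $1=\sum_iE_i$ automatically lies in $U_e$, since the identity of a unital $G$-graded algebra is homogeneous of degree $e$.) Thus everything hinges on the homogeneity of the $E_i$.

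To attack this, I would exploit the radical filtration together with annihilators. Since $J$ is graded and nilpotent, each power $J^k=\bigoplus_{j-i\ge k}M_{ij}$ is again a graded ideal. A direct computation identifies the right and left annihilators of these powers: $\mathrm{r.ann}_U(J)=\bigoplus_bM_{1b}=E_1U$ is the first block row, and more generally $\mathrm{r.ann}_U(J^k)=\bigoplus_{a\le k}M_{ab}$ is the union of the first $k$ block rows; symmetrically $\mathrm{l.ann}_U(J^k)=\bigoplus_{b\ge t-k+1}M_{ab}$ is the union of the last $k$ block columns. The key point is that the annihilator of a graded subspace is graded: if $S$ is spanned by homogeneous elements and $sx=0$ for all $s\in S$ with $x=\sum_gx_g$, then for homogeneous $s$ the products $sx_g$ lie in pairwise distinct degrees, forcing $sx_g=0$ for every $g$. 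Hence all these block rows and block columns are graded ideals, and so is any intersection $\bigoplus_{a\le i,\,b\ge j}M_{ab}$. Reducing modulo the graded ideal $J$, the images in $U/J\cong\prod_kM_{kk}$ of the first $i$ block rows and of the last $t-i+1$ block columns are graded subspaces whose intersection is precisely $M_{ii}$. Therefore each diagonal block is a graded ideal of $U/J$, and consequently its identity $\overline{E_i}$ is homogeneous of degree $e$ in $U/J$.

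The hard part will be the final lift from $U/J$ back to $U$, that is, upgrading \emph{$\overline{E_i}$ is homogeneous} to \emph{$E_i$ is homogeneous}. Here I would use that $J$ is a graded nilpotent ideal and that $E_1,\dots,E_t$ is a complete system of orthogonal idempotents. Writing $E_i=\sum_g(E_i)_g$, the relation $\overline{E_i}\in(U/J)_e$ already gives $(E_i)_g\in J$ for every $g\ne e$; I would then feed this into the idempotent relations $E_i^2=E_i$ and $E_iE_j=\delta_{ij}E_i$ and into the nilpotency $J^t=0$, peeling off the non-neutral homogeneous layers one radical degree at a time, in the spirit of a graded idempotent-lifting argument (this is the step in which the hypothesis on $\operatorname{char}K$ is decisive). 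The delicate issue — and the genuine obstacle of the lemma — is to ensure that this process pins down the \emph{original} family $E_i$ as homogeneous, rather than merely producing some homogeneous complete orthogonal system inducing the same decomposition of $U/J$; controlling this ambiguity is exactly what makes the passage to the ungraded blocks $M_{ij}$ the crux of the argument.
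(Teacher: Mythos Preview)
Your plan to prove that each original idempotent $E_i$ is homogeneous cannot succeed, and you have in fact put your finger on exactly why: in general the original $E_i$ are \emph{not} homogeneous, so neither are the $M_{ij}$ literally. Take $U=UT(1,1)$ with the $\mathbb{Z}$-grading obtained by transporting the standard elementary grading ($\deg e_{11}=\deg e_{22}=0$, $\deg e_{12}=1$) along conjugation by $A=1+e_{12}$. Since $Ae_{12}A^{-1}=e_{12}$, the radical $J=Ke_{12}$ remains graded, but now the degree-$0$ component is spanned by $e_{11}-e_{12}$ and $e_{22}+e_{12}$, so $E_1=e_{11}=(e_{11}-e_{12})+e_{12}$ is not homogeneous and $M_{11}=Ke_{11}$ is not a graded subspace. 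No graded idempotent-lifting argument can repair this, and the characteristic hypothesis is irrelevant here: the lemma itself carries no assumption on $\operatorname{char}K$ (that enters only later, via Gordienko's theorem, to ensure that $J$ is graded in the first place).

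The lemma must therefore be read \emph{up to isomorphism}, and the paper's proof makes this explicit. It uses only the right annihilator $R=\mathrm{Ann}^r_U(J)=\bigoplus_j M_{1j}$, observes that $R$ has $E_1$ as a left unit, and invokes the fact that a graded associative algebra with a left unit possesses a \emph{homogeneous} left unit $u_1$. This $u_1$ is idempotent with diagonal form $E_1$, so after an automorphism of $U$ one may assume $E_1$ itself homogeneous; induction on $(1-E_1)U\cong UT(n_2,\dots,n_t)$ handles the remaining $E_i$, and then $M_{ij}=E_iUE_j$ is graded. Your annihilator/quotient argument is a perfectly good substitute for the first half of this, yielding that each $\overline{E_i}$ is homogeneous in $U/J$; but the correct endpoint of the lifting step is the existence of \emph{some} homogeneous complete orthogonal system conjugate to $(E_1,\dots,E_t)$ --- precisely the alternative you flagged as an ``ambiguity'' to be eliminated --- followed by an automorphism, not the homogeneity of the original $E_i$.
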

\begin{proof}
	Recall that the annihilator (left, right or two-sided) of a graded subset is again graded. Then $R:=\text{Ann}_U^r(J)=\bigoplus_{j=1}^t M_{1j}$ (the right annihilator of $J$) is graded. 

It is well known that the unity of an unital associative graded algebra is always homogeneous. Exactly the same argument can be used to prove the following: if an associative algebra has a left unit, then there exists a homogeneous left unity in the algebra. Note that $R$ has a left unity (the identity matrix $E_1\in M_{11}$), hence it must admit a homogeneous left unit, say $u_1$. Clearly $u_1^2=u_1$, hence $u_1$ is diagonalizable; moreover, the diagonal form of $u_1$ is exactly $E_1$. So, after applying an isomorphism, we can assume $E_1$ homogeneous.

	Now, since $(1-E_1)U\cong UT(n_2,n_3,\ldots,n_t)$ we can proceed by induction. Moreover, if $i<j$ and $E_i$ and $E_j$ are the identity matrices of $M_{ii}$ and $M_{jj}$, respectively, then $M_{ij}=E_iUE_j$ is a graded subspace.
\end{proof}

So we can assume every matrix subalgebra $M_{ii}$ graded. It follows that every $M_{ii}\cong\mathcal{M}_{i}\otimes D_i$, where $\mathcal{M}_{i}$ is a $p_i\times p_i$ matrix algebra equiped with an elementary grading given by $(g_1^{(i)},\ldots,g_{p_i}^{(i)})$, and $D_i$ is a graded division algebra, where the grading on $\mathcal{M}_{i}\otimes D_i$ is induced by (\ref{eq_degree}). Since we identify $M_{ii}=\mathcal{M}_{i}\otimes D_i$, we also (equivalently) identify $M_{ii}=\mathcal{M}_{i}(D_i)$, the $p_i\times p_i$ matrix algebra with coefficients in $D_i$. We denote the elements of $M_{ii}$ as linear combination of $m\otimes d$, where $m\in\mathcal{M}_{i}$, and $d\in D_i$. As mentioned before, we assume that each $M_{ii}$ is a subalgebra of $U$. Moreover, under these identifications, each $M_{ij}$ is a (graded) $(M_{ii},M_{jj})$-bimodule; and $U$ is a (graded) $(M_{ii},M_{jj})$-bimodule as well.

It is well known that every automorphism of a matrix algebra is inner, hence we can find an invertible matrix $A_i$ such that $A_iM_{ii}A_i^{-1}=\mathcal{M}_{i}\otimes D_i$, where the grading on $\mathcal{M}_{i}\otimes D_i$ is given by \eqref{eq_degree}. Taking the block-diagonal matrix $A'=\text{diag}(A_1,A_2,\ldots,A_t)$, we obtain an automorphism of $U$ such that every $M_{ii}=\mathcal{M}_{i}\otimes D_i$.

Denote the matrix units of each $\mathcal{M}_{k}$ by $e_{ij}^{(k)}$. Given $e_{ii}^{(r)}\in\mathcal{M}_{r}$, $e_{jj}^{(s)}\in\mathcal{M}_{s}$, let
\begin{equation}\label{eq1}
V=V_{ij}^{(r,s)}=(e_{ii}^{(r)}\otimes1)U(e_{jj}^{(s)}\otimes1).
\end{equation}
$V$ is a graded subspace of $U$; moreover, $V$ is a $(D_r,D_s)$-bimodule via
$$
d_1\ast v\ast d_2=(e_{ii}^{(r)}\otimes d_1)v(e_{jj}^{(s)}\otimes d_2),\quad d_1\in D_r,d_2\in D_s,v\in V.
$$
Note that $e_{ii}^{(r)}\otimes D_r$ is a graded subalgebra of $U$, and $e_{ii}^{(r)}\otimes D_r\cong{}^{[g_i^{(r)}]}D_r^{[(g_i^{(r)})^{-1}]}$. Similarly, $e_{jj}^{(s)}\otimes D_s\cong{}^{[g_j^{(s)}]}D_s^{[(g_j^{(s)})^{-1}]}$. Thus $V$ is a graded $(e_{ii}^{(r)}\otimes D_r,e_{jj}^{(s)}\otimes D_s)$-bimodule.
\begin{Lemma}
In the notation above, for any nonzero homogeneous $v\in V$, we have $V=D_r\ast v=v\ast D_s$. Moreover, if $h=\deg v$, then there exists a weak isomorphism $\psi_{rs}=\psi_{r,s,i,j}:D_r\to D_s$ such that $d\ast v=v\ast\psi_{rs}(d)$, for all $d\in D_r$, and
\begin{equation}\label{weakiso}
g_j^{(s)}\deg_{D_s}\psi_{rs}(d)\left(g_j^{(s)}\right)^{-1}=g_i^{(r)}h(\deg_{D_r}d)h^{-1}\left(g_i^{(r)}\right)^{-1},
\end{equation}
for any non-zero homogeneous $d\in D_r$.
\end{Lemma}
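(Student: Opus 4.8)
My plan rests on the structure theorem for graded modules over graded division algebras quoted above. Both $e_{ii}^{(r)}\otimes D_r$ and $e_{jj}^{(s)}\otimes D_s$ are graded division algebras (their gradings arise from those of $D_r,D_s$ via the group automorphisms $\delta\mapsto g_i^{(r)}\delta(g_i^{(r)})^{-1}$ and $\delta\mapsto g_j^{(s)}\delta(g_j^{(s)})^{-1}$), and $V$ is a graded left module over the former and a graded right module over the latter, the two actions commuting because both come from multiplication in $U$. Hence $V$ is free on each side; write $a$ and $b$ for its left and right graded ranks.

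My first task is to show $a=b=1$. Writing $D_r=M_{q_r}(K)$ and $D_s=M_{q_s}(K)$ as ungraded algebras, the idempotents $e_{ii}^{(r)}\otimes1$ and $e_{jj}^{(s)}\otimes1$ have ranks $q_r$ and $q_s$, so cutting $U$ down on both sides gives $\dim_K V=q_rq_s$, while freeness gives $\dim_K V=a\,q_r^2=b\,q_s^2$. Hence $a\,q_r=q_s$ and $b\,q_s=q_r$, so $ab=1$ and therefore $a=b=1$ (in particular $V=0$ unless $q_r=q_s$). Thus $V$ is free of rank one on each side, and any nonzero homogeneous $v$ corresponds, under a rank-one trivialisation, to a nonzero homogeneous, hence invertible, element of a graded division algebra; so $v$ is a free generator and $V=D_r\ast v=v\ast D_s$.

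Next I would build $\psi_{rs}$. Fix nonzero homogeneous $v$ with $\deg v=h$. For each $d\in D_r$ we have $d\ast v\in V=v\ast D_s$, so there is a unique $\psi_{rs}(d)\in D_s$ with $d\ast v=v\ast\psi_{rs}(d)$, uniqueness coming from the injectivity of $d_2\mapsto v\ast d_2$. This map is additive and unital, and it is multiplicative because the two actions commute: $(d_1d_2)\ast v=d_1\ast(v\ast\psi_{rs}(d_2))=(v\ast\psi_{rs}(d_1))\ast\psi_{rs}(d_2)=v\ast\bigl(\psi_{rs}(d_1)\psi_{rs}(d_2)\bigr)$. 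It is bijective (injective since $d\ast v=0$ forces $d=0$ by left freeness, surjective since every $v\ast d'\in V=D_r\ast v$ equals some $d\ast v$). Finally it is a weak isomorphism: the trivialisations $D_r\to V$, $d\mapsto d\ast v$, and $D_s\to V$, $d'\mapsto v\ast d'$, are homogeneous with fixed degree shifts, so $\psi_{rs}$ carries homogeneous elements to homogeneous elements.

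It remains to produce \eqref{weakiso}, which is the only delicate point. For homogeneous $d$ I would compare the $G$-degrees of the two sides of $d\ast v=v\ast\psi_{rs}(d)$ in $U$ using \eqref{eq_degree}: the left side has degree $g_i^{(r)}(\deg_{D_r}d)(g_i^{(r)})^{-1}h$ and the right side has degree $h\,g_j^{(s)}(\deg_{D_s}\psi_{rs}(d))(g_j^{(s)})^{-1}$. Equating these and isolating $g_j^{(s)}(\deg_{D_s}\psi_{rs}(d))(g_j^{(s)})^{-1}$ yields \eqref{weakiso}. The main obstacle throughout is precisely this noncommutative bookkeeping: one must keep the left and right shifts by $g_i^{(r)}$ and $g_j^{(s)}$ and the conjugation by $h$ in the correct order and resist collapsing them as one could in the abelian case; the freeness and the rank computation, by contrast, are robust and characteristic-free.
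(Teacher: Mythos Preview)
Your argument is correct and follows the paper's line almost verbatim: the same dimension count (freeness over the graded division algebras $e_{ii}^{(r)}\otimes D_r$ and $e_{jj}^{(s)}\otimes D_s$ forces both ranks to be $1$), the same construction of $\psi_{rs}$ from $d\ast v=v\ast\psi_{rs}(d)$, and the same degree comparison. One caution on the last step: equating your two degrees actually gives
\[
g_j^{(s)}\bigl(\deg_{D_s}\psi_{rs}(d)\bigr)\bigl(g_j^{(s)}\bigr)^{-1}=h^{-1}\,g_i^{(r)}\bigl(\deg_{D_r}d\bigr)\bigl(g_i^{(r)}\bigr)^{-1}h,
\]
which is exactly what the paper's proof derives (from $\deg T(x)=h^{-1}(\deg x)h$) and what is used later in Lemma~\ref{reflemma}, but is not literally the displayed \eqref{weakiso}; the printed \eqref{weakiso} has $h$ and $g_i^{(r)}$ transposed and appears to be a typo.
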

\begin{proof}
If $e_{ii}^{(r)}\otimes D_r$ consists of $n_r'\times n_r'$ matrices and $e_{jj}^{(s)}\otimes D_s$ is $n_s'\times n_s'$ matrices then $V$ is $n_r'\times n_s'$ matrices. From the structure of graded modules over graded division algebras, we obtain $n_r'n_s'=k_1n_r^{\prime2}=k_2n_s^{\prime2}$, for some $k_1,k_2\in\mathbb{N}$. This is possible only if $n_r'=n_s'$, so $\dim_{D_r}V=\dim_{D_s}V=1$. Hence, given a non-zero homogeneous $v\in V$ of degree $h\in G$, we have $V=D_r\ast v=v\ast D_s$. As a consequence, for any $x\in e_{ii}^{(r)}\otimes D_r$, there exists $y\in e_{jj}^{(s)}\otimes D_s$ such that $xv=vy$; in particular, if $x$ is homogeneous, then $y$ is homogeneous as well and $\deg x=h(\deg y)h^{-1}$. Let $T:x\in e_{ii}^{(r)}\otimes D_r\mapsto y\in e_{jj}^{(s)}\otimes D_s$. Clearly $T$ is a linear map. Also, for each homogeneous $x\in e_{ii}^{(r)}\otimes D_r$, one has $\deg T(x)=h^{-1}(\deg x)h$. Futhermore, $vT(x_1x_2)=x_1x_2v=x_1vT(x_2)=vT(x_1)T(x_2)$. Since $D_r$ is a graded division algebra, one obtains $T(x_1x_2)=T(x_1)T(x_2)$, which means that $T$ is a homomorphism of algebras. Thus, $T$ is a weak isomorphism between $e_{ii}^{(r)}\otimes D_r$ and $e_{jj}^{(s)}\otimes D_s$. Finally, we can define $\psi_{rs}$ by the composition of weak isomorphisms $\psi_{rs}:D_r\cong e_{ii}^{(r)}\otimes D_r\stackrel{T}{\to}e_{jj}^{(s)}\otimes D_s\cong D_s$.
\end{proof}

Now, for each $r$, we set (as in \eqref{eq1})
$$
V^{r,r+1}=V_{p_r,1}^{(r,r+1)}.
$$
Let $v^{r,r+1}\in V^{r,r+1}$ be a nonzero homogeneous. Denote $\psi_{r,r+1}:D_r\to D_{r+1}$ the respective weak isomorphism as in the previous lemma. For each $r<s$, let
\begin{align*}
&\psi_{r,s}=\psi_{s-1,s}\circ\cdots\circ\psi_{r,r+1}:D_r\to D_s,\\%
&v^{rs}=v^{r,r+1}(e_{1,p_{r+1}}^{(r+1)}\otimes1)v^{r+1,r+2}(e_{1,p_{r+2}}^{(r+2)}\otimes1)\cdots v^{s-1,s}.
\end{align*}
\noindent\textbf{Claim.} $v^{rs}\ne0$ is homogeneous, $\psi_{rs}$ is a weak isomorphism and
\begin{equation}\label{eqcom}
d\ast v^{rs}=v^{rs}\ast\psi_{rs}(d),
\end{equation}
for each homogeneous $d\in D_r$.

Indeed, the $\psi_{rs}$ is a composition of weak isomorphisms, so it is a weak-isomorphism as well. Equation \eqref{eqcom} is an easy induction. Finally, we have
$$
v^{rs}\ast D_s=v^{r,s-1}(e_{1,p_{s-1}}^{(s-1)}\otimes1)v^{s-1,s}\ast D_s=\left(v^{r,s-1}\ast D_{s-1}\right)(e_{1,p_{s-1}}^{(s-1)}\otimes1)(v^{s-1,s}\ast D_s).
$$
We assume, by induction, that $v^{r,s-1}\ast D_{s-1}=V^{(r,s-1)}_{p_r,1}$. Also, clearly
$$
V^{(r,s-1)}_{p_r,1}(e_{1,p_{s-1}}^{(s-1)}\otimes1)V^{(s-1,s)}_{p_{s-1},1}\ne0.
$$
In particular, $v^{rs}\ne0$.

Now, for each $i=2,3,\ldots,t$, let $u_i=\left(g_1^{(i)}\right)^{-1}(\deg v^{1i})^{-1}g_{p_1}^{(1)}$, and let $u_1=1$. Define
$$
\eta=(g_1^{(1)},\ldots,g_{p_1}^{(1)},g_1^{(2)}u_2,\ldots,g_{p_2}^{(2)}u_2,\ldots,g_1^{(t)}u_t,\ldots,g_{p_t}^{(t)}u_t).
$$

\begin{Lemma}\label{reflemma}
If $J$ is graded, then the conjecture of Valenti-Zaicev is valid.
\end{Lemma}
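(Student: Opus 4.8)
The goal is to produce an explicit graded isomorphism $\Phi\colon(U,D_1,\eta)\to U$, where $U$ carries its given grading, already normalized so that each $M_{ii}=\mathcal{M}_i\otimes D_i$. Writing $D:=D_1$, the algebra $(U,D_1,\eta)$ is nothing but $UT(p_1,\ldots,p_t)\otimes D_1$, with the elementary grading determined by the sequence $\eta$ on $UT(p_1,\ldots,p_t)$ and the division grading on $D_1$; this is exactly the shape predicted by the conjecture (with $n_i'=p_i$). Hence, once such a $\Phi$ is produced, the proof is complete.

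I would define $\Phi$ on generators. On a diagonal block, using the weak isomorphism $\psi_{1i}\colon D_1\to D_i$ to transport everything to the common algebra $D=D_1$, set $\Phi(e_{ab}^{(i)}\otimes d)=e_{ab}^{(i)}\otimes\psi_{1i}(d)\in M_{ii}$ (with $\psi_{11}=\mathrm{id}$). On the off-diagonal block $(i,j)$, exploiting that $v^{ij}$ bridges position $p_i$ of block $i$ to position $1$ of block $j$, set $\Phi(e_{ab}^{(i,j)}\otimes d)=(e_{a,p_i}^{(i)}\otimes1)\,v^{ij}\,(e_{1,b}^{(j)}\otimes\psi_{1j}(d))$. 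Bijectivity follows from the previous Lemma and the Claim: since $v^{ij}\ast D_j=V_{p_i,1}^{(i,j)}$, the images span every $M_{ij}$. Multiplicativity across blocks is routine but must be checked with care; it comes from the factorization $v^{ik}=v^{ij}(e_{1,p_j}^{(j)}\otimes1)v^{jk}$ together with the intertwining relations $d\ast v^{ij}=v^{ij}\ast\psi_{ij}(d)$ and the identity $\psi_{1j}=\psi_{ij}\circ\psi_{1i}$, which let one move the division-algebra factors across the bridges.

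The heart of the argument is degree preservation. On a diagonal block the target degree of $e_{ab}^{(i)}\otimes d$ equals $g_a^{(i)}u_i(\deg_{D_1}d)u_i^{-1}(g_b^{(i)})^{-1}$, whereas in $U$ one has $\deg(e_{ab}^{(i)}\otimes\psi_{1i}(d))=g_a^{(i)}(\deg_{D_i}\psi_{1i}(d))(g_b^{(i)})^{-1}$; the two coincide once $\deg_{D_i}\psi_{1i}(d)=u_i(\deg_{D_1}d)u_i^{-1}$. This equality is read off from the $G$-degrees in $(e_{p_1p_1}^{(1)}\otimes d)v^{1i}=v^{1i}(e_{11}^{(i)}\otimes\psi_{1i}(d))$ after substituting $u_i=(g_1^{(i)})^{-1}(\deg v^{1i})^{-1}g_{p_1}^{(1)}$, and it is exactly \eqref{weakiso} specialized to $v^{1i}$. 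On an off-diagonal block the same substitution reduces the required identity to $\deg v^{ij}=g_{p_i}^{(i)}u_iu_j^{-1}(g_1^{(j)})^{-1}$. For $i=1$ this is precisely the definition of $u_j$; for general $i$ it follows from the telescoping relation $\deg v^{1j}=(\deg v^{1i})\,g_1^{(i)}(g_{p_i}^{(i)})^{-1}\,(\deg v^{ij})$, obtained from $v^{1j}=v^{1i}(e_{1,p_i}^{(i)}\otimes1)v^{ij}$, combined with the formulas $\deg v^{1i}=g_{p_1}^{(1)}u_i^{-1}(g_1^{(i)})^{-1}$ and $\deg v^{1j}=g_{p_1}^{(1)}u_j^{-1}(g_1^{(j)})^{-1}$ coming from the definitions of the shifts.

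I expect the degree bookkeeping to be the main obstacle: one must reconcile the grading shift carried by each weak isomorphism $\psi_{1i}$ with the shift $u_i$ encoded in $\eta$, and confirm that the position factors $g_{p_i}^{(i)}(g_1^{(i)})^{-1}$ produced by the bridging elements $v^{ij}$ make everything telescope consistently. The shifts $u_i$ were defined precisely so that these matches occur. Once degree preservation is verified, $\Phi$ is a graded isomorphism $U\cong(U,D_1,\eta)=UT(p_1,\ldots,p_t)\otimes D_1$, which is exactly the assertion of the Valenti--Zaicev conjecture.
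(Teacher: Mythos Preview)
Your proposal is correct and follows essentially the same approach as the paper: you build the same graded homomorphism $(U,D_1,\eta)\to U$, using $\psi_{1i}$ on diagonal blocks and the bridges $v^{ij}$ on off-diagonal blocks, and verify degree preservation via the same telescoping identity $\deg v^{1j}=(\deg v^{1i})\,g_1^{(i)}(g_{p_i}^{(i)})^{-1}\deg v^{ij}$ together with \eqref{weakiso}. The only cosmetic difference is that you attach the division factor on the right of $v^{ij}$ via $\psi_{1j}(d)$ whereas the paper attaches it on the left via $\psi_{1i}(d)$; the intertwining relation $\psi_{1i}(d)\ast v^{ij}=v^{ij}\ast\psi_{1j}(d)$ makes these two formulations literally equal.
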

\begin{proof}
Using the notation above, let $\mathcal{A}=(UT(n_1,\ldots,n_t),D_1,\eta)$. We note that $\mathcal{A}$ has the vector space decomposition $\bigoplus_{i,j}M_{ij}$, the same as $U$. So, for any element of the kind $e_{ij}\otimes d\in\mathcal{A}$, there exist unique $k,\ell$ such that $e_{ij}\otimes d\in M_{k\ell}$. If $k=\ell$, then $\bar{\i}$ and $\bar{\j}$ will designate the integers such that $e_{ij}\otimes d=e_{\bar{\i}\bar{\j}}^{(\ell)}\otimes d'\in M_{\ell\ell}$, for some (not necessarily homogeneous) $d'\in D_\ell$. This is well defined, since $D_1$ and each $D_r$ corresponds to same size square matrices.

Define $\psi:\mathcal{A}\to U$ by
$$
\psi(e_{ij}\otimes d)=\left\{\begin{array}{l}%
e_{\bar{\i}\bar{\j}}^{(\ell)}\otimes\psi_{1\ell}(d),\text{ if $e_{ij}\in M_{\ell\ell}$, for some $\ell$},\\%
(e_{\bar{\i}p_k}^{(k)}\otimes1)(\psi_{1k}(d)\ast v^{k\ell})(e_{1\bar{\j}}^{(\ell)}\otimes1),\text{ if $e_{ij}\in M_{k\ell}$}%
\end{array}\right.
$$
\noindent\textbf{Claim.} $\psi$ is an algebra homomorphism.

Indeed, we see that if $j\ne k$, then $\psi(e_{ij}\otimes d_1)\psi(e_{jk}\otimes d_2)=0$. So, let $e_{ij}\otimes d_1,e_{jk}\otimes d_2\in\mathcal{A}$.
\begin{enumerate}
\renewcommand{\labelenumi}{(\roman{enumi})}
\item If $e_{ij}\otimes d_1\in M_{rr}$ and $e_{jk}\otimes d_2\in M_{rs}$, then
\begin{align*}
\psi(e_{ij}\otimes d_1)\psi(e_{jk}&\otimes d_2)=(e_{\bar{\i}\bar{\j}}^{(r)}\otimes\psi_{1r}(d_1))(e_{\bar{\j}p_k}^{(r)}\otimes1)(\psi_{1r}(d_2)\ast v^{rs})(e_{1\bar{k}}^{(s)}\otimes1)\\%
&=(e_{\bar{\i}p_r}^{(r)}\otimes\psi_{1r}(d_1))(e_{p_rp_r}^{(r)}\otimes\psi_{1r}(d_2))v^{rs}(e_{1\bar{k}}^{(s)}\otimes1)\\%
&=(e_{\bar{\i}p_r}^{(r)}\otimes1)(\psi_{1r}(d_1d_2)\ast v^{rs})(e_{1\bar{k}}^{(s)}\otimes1)=\psi(e_{ik}\otimes d_1d_2).
\end{align*}
\item If $e_{ij}\otimes d_1\in M_{rs}$ and $e_{jk}\otimes d_2\in M_{ss}$ then an analogous computation is valid, using the following consequence of \eqref{eqcom}:
\begin{equation*}
v^{rs}\ast\psi_{1s}(d)=v^{rs}\ast\psi_rs(\psi_{1r}(d))=\psi_{1r}(d)\ast v^{rs}.
\end{equation*}
\item Assume $e_{ij}\otimes d_1\in M_{rs}$ and $e_{jk}\otimes d_2\in M_{s\ell}$. We can write
$$
\psi(e_{ij}\otimes d_1)=(e_{\bar{\i}p_r}^{(r)}\otimes1)\psi_{1r}(d_1)\ast v^{rs}(e_{1\bar{\j}}^{(s)}\otimes1)=(e_{\bar{\i}p_r}^{(r)}\otimes\psi_{1r}(d_1))v^{rs}(e_{1\bar{\j}}^{(s)}\otimes1).
$$
Similarly, $\psi(e_{jk}\otimes d_2)=(e_{\bar{\j}p_s}^{(s)}\otimes\psi_{1s}(d_2))v^{s\ell}(e_{1\bar{k}}^{(\ell)}\otimes1)$. Then
\begin{align*}
\psi(e_{ij}\otimes &d_1)\psi(e_{jk}\otimes d_2)\\%
&=(e_{\bar{\i}p_r}^{(r)}\otimes\psi_{1r}(d_1))v^{rs}(e_{1\bar{\j}}^{(s)}\otimes1)(e_{\bar{\j}p_s}^{(s)}\otimes\psi_{1s}(d_2))v^{s\ell}(e_{1\bar{k}}^{(\ell)}\otimes1)\\%
&=(e_{\bar{\i}p_r}^{(r)}\otimes\psi_{1r}(d_1))(\psi_{1r}(d_2)\ast v^{rs})(e_{1p_s}^{(s)}\otimes1)v^{s\ell}(e_{1\bar{k}}^{(\ell)}\otimes1)\\%
&=(e_{\bar{\i}p_r}^{(r)}\otimes\psi_{1r}(d_1d_2))v^{r\ell}(e_{1\bar{k}}^{(\ell)}\otimes1)\\%
&=\psi(e_{ik}\otimes d_1d_2).
\end{align*}
\item Finally, if $e_{ij}\otimes d_1,e_{jk}\otimes d_2\in M_{rr}$, then it is easy to check that $\psi(e_{ij}\otimes d_1)\psi(e_{jk}\otimes d_2)=\psi(e_{ik}\otimes d_1d_2)$.
\end{enumerate}
So the claim is true.

\noindent\textbf{Claim.} $\psi$ is a $G$-graded map.

Let $e_{ij}\otimes d\in\mathcal{A}$ be homogeneous, and let $r,s$ be such that $e_{ij}\otimes d\in M_{rs}$, as before. We note that, by \eqref{weakiso} and by the choice of each $u_r$, we have
\begin{align*}
\deg_{D_r}\psi_{1r}(d)&=\left(g_1^{(r)}\right)^{-1}(\deg_Uv^{1r})^{-1}g_{p_1}^{(1)}(\deg_{D_1}d)\left(g_{p_1}^{(1)}\right)^{-1}(\deg_Uv^{1r})g_1^{(r)}\\%
&=u_r(\deg_{D_1}d)u_r^{-1}.
\end{align*}
Then, by definition of the grading on $\mathcal{A}$, we have
$$
\deg_\mathcal{A} e_{ij}\otimes d=g_{\bar{\i}}^{(r)}u_r(\deg_{D_1}d)u_s^{-1}\left(g_{\bar{\j}}^{(s)}\right)^{-1}.
$$
If $r=s$, then $\psi(e_{ij}\otimes d)=e_{\bar{\i}\bar{\j}}^{(r)}\otimes\psi_{1r}(d)$, and
$$
\deg_U\psi(e_{ij}\otimes d)=g_{\bar{\i}}^{(r)}(\deg_{D_r}\psi_{1r}(d))\left(g_{\bar{\j}}^{(r)}\right)^{-1}.
$$
Thus both degrees coincide.

Now, if $r<s$, then $\psi(e_{ij}\otimes d)=(e_{\bar{\i}p_r}^{(r)}\otimes\psi_{1r}(d))v^{rs}(e_{1\bar{\j}}^{(s)}\otimes1)$. So
\begin{align*}
\deg_U\psi(e_{ij}\otimes d)&=g_{\bar{\i}}^{(r)}(\deg_{D_r}\psi_{1r}(d))\left(g_{p_r}^{(r)}\right)^{-1}(\deg_Uv^{rs})g_1^{(s)}\left(g_{\bar{\j}}^{(s)}\right)^{-1}\\%
&=g_{\bar{\i}}^{(r)}u_r(\deg_{D_1}d)u_r^{-1}\left(g_{p_r}^{(r)}\right)^{-1}(\deg_Uv^{rs})g_1^{(s)}\left(g_{\bar{\j}}^{(s)}\right)^{-1}.
\end{align*}
So, we need to show that $u_r^{-1}\left(g_{p_r}^{(r)}\right)^{-1}(\deg_Uv^{rs})g_1^{(s)}=v_s^{-1}$. Since $v^{1s}=v^{1r}(e^{(r)}_{1p_r}\otimes1)v^{rs}$, we obtain $\deg v^{1s}=\deg v^{1r}g_1^{(r)}\left(g_{p_r}^{(r)}\right)^{-1}\deg v^{r}$. Thus
$$
u_r^{-1}\left(g_{p_r}^{(r)}\right)^{-1}\deg v^{rs}g_1^{(s)}=\left(g_{p_1}^{(1)}\right)^{-1}\deg v^{1s}g_1^{(s)}=u_s^{-1}.
$$
Hence, the degrees coincide again, and the proof os complete.
\end{proof}
So, if the Jacobson radical of $U$ is graded, then we obtain a nice description of the grading. In this direction, an important result is the following theorem, due to Gordienko:

\begin{Lemma}[Corollary 3.3 of \cite{Gord}]
Let $A$ be a finite-dimensional associative algebra over a field $K$ graded by any group $G$. Suppose that either $\text{char}\,K=0$ or $\text{char}\,K>\dim A$. Then the Jacobson radical $J:=J(A)$ is a graded ideal of $A$.\qed
\end{Lemma}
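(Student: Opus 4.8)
The plan is to manufacture, out of the grading, a single symmetric bilinear form whose radical is simultaneously visibly graded and equal to the Jacobson radical $J(A)$. For $x\in A$ let $L_x\colon A\to A$ denote left multiplication; since $L_{xy}=L_xL_y$, the map $x\mapsto L_x$ is an algebra homomorphism $A\to\operatorname{End}_K(A)$. Set $B(x,y)=\operatorname{tr}(L_{xy})$. Then $B$ is symmetric because $\operatorname{tr}(L_xL_y)=\operatorname{tr}(L_yL_x)$, and it is associative in the sense that $B(xy,z)=\operatorname{tr}(L_{xyz})=B(x,yz)$; this associativity, together with the trace identity $\operatorname{tr}(L_aL_b)=\operatorname{tr}(L_bL_a)$, forces $\operatorname{rad}B:=\{x\in A:B(x,A)=0\}$ to be a two-sided ideal of $A$. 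The whole statement then reduces to two claims: (i) $\operatorname{rad}B$ is a graded subspace, and (ii) $\operatorname{rad}B=J(A)$.

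For (i) the decisive observation is that $B$ is strongly compatible with the grading: $B(A_g,A_h)=0$ whenever $gh\ne e$ (where $e$ is the identity of $G$). Indeed, for homogeneous $x\in A_g$, $y\in A_h$ the product lies in $A_{gh}$, so $L_{xy}$ carries each $A_k$ into $A_{ghk}$; in a homogeneous basis adapted to $A=\bigoplus_k A_k$ the only possibly nonzero blocks of $L_{xy}$ are those sending $A_k$ to $A_{ghk}$, and a diagonal block requires $ghk=k$, i.e. $gh=e$. Hence $\operatorname{tr}(L_{xy})=0$ when $gh\ne e$. Granting this, the radical of such a form is automatically homogeneous: if $x=\sum_g x_g$ satisfies $B(x,A)=0$, then testing against an arbitrary homogeneous $y\in A_h$ kills every summand except $x_{h^{-1}}$, so $B(x_{h^{-1}},A_h)=0$ for each $h$, which shows each $x_g\in\operatorname{rad}B$.

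For (ii), the inclusion $J(A)\subseteq\operatorname{rad}B$ holds over any field: if $x\in J(A)$ then $xy\in J(A)$ is nilpotent for every $y$, so $L_{xy}$ is a nilpotent operator and $B(x,y)=\operatorname{tr}(L_{xy})=0$. The reverse inclusion is the heart of the matter and is where the hypothesis on the characteristic enters. Since $\operatorname{rad}B$ is an ideal, it suffices to show it is nil. Fix $x\in\operatorname{rad}B$; then for every $k\ge1$ we have $\operatorname{tr}(L_x^{\,k})=\operatorname{tr}(L_{x^k})=B(x,x^{k-1})=0$, so every power trace of $L_x\in\operatorname{End}_K(A)$ vanishes. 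Newton's identities convert the vanishing of the power sums $p_1,\dots,p_{\dim A}$ of the eigenvalues of $L_x$ into the vanishing of the elementary symmetric functions $e_1,\dots,e_{\dim A}$, provided the integers $1,2,\dots,\dim A$ are invertible in $K$ — which is exactly the assumption $\operatorname{char}K=0$ or $\operatorname{char}K>\dim A$. Thus the characteristic polynomial of $L_x$ is $T^{\dim A}$, the operator $L_x$ is nilpotent, and therefore $x^m=(L_x)^m(1)=0$ for large $m$. Hence $\operatorname{rad}B$ is a nil, and so (by finite-dimensionality) nilpotent, ideal, whence $\operatorname{rad}B\subseteq J(A)$.

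Combining (i) and (ii) yields $J(A)=\operatorname{rad}B$, a graded ideal. I expect the main obstacle to be precisely the step inside (ii) passing from \emph{all power traces of $L_x$ vanish} to \emph{$L_x$ is nilpotent}: this is the one place that genuinely needs a restriction on the characteristic, since in small positive characteristic an operator can have every power trace zero without being nilpotent (for instance the identity map on a space whose dimension is a multiple of $\operatorname{char}K$). Everything else — the symmetry and associativity of $B$, its vanishing on $A_g\times A_h$ for $gh\ne e$, the homogeneity of $\operatorname{rad}B$, and the inclusion $J(A)\subseteq\operatorname{rad}B$ — is characteristic-free and uses only that $A$ is finite-dimensional with nilpotent radical.
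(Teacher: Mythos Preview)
Your argument is correct and self-contained. Note, however, that the paper does not actually prove this lemma: it is quoted verbatim from Gordienko's paper (the \verb|\qed| after the statement signals that no proof is supplied). So there is no ``paper's own proof'' to compare against; what you have written is a clean independent justification of the cited result, and a rather different one from Gordienko's, whose approach goes through co-stability of radicals under Hopf-type actions rather than through a trace form.

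Two small remarks on your write-up. First, you silently assume $A$ is unital: you use $B(x,x^{0})=B(x,1)$ to get $\operatorname{tr}(L_x)=0$, and you conclude $x^{m}=(L_x)^{m}(1)=0$. For the application in this paper (to $UT(n_1,\ldots,n_t)$, which has a unit) this is harmless. If you want the lemma in the non-unital generality in which it is stated, both points are easy to repair: from $L_x^{N}=0$ one gets $x^{N}A=0$ and hence $x^{N+1}=0$; and for the power sums, you in fact have $p_k=\operatorname{tr}(L_x^{k})=0$ for \emph{all} $k\ge 2$, so a Vandermonde argument on the distinct nonzero eigenvalues (whose multiplicities are $<\dim A<\operatorname{char}K$, hence nonzero in $K$) already forces $L_x$ to be nilpotent without ever invoking $p_1$. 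Second, the sentence ``forces $\operatorname{rad}B$ to be a two-sided ideal'' is true, but the one-line justification you give (associativity plus symmetry) is really the combination $B(ax,y)=\operatorname{tr}(L_aL_xL_y)=\operatorname{tr}(L_xL_yL_a)=B(x,ya)=0$; it might be worth spelling that cyclic-trace step out, since associativity alone only gives one side.
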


Combining Gordienko's Theorem and Lemma \ref{reflemma}, we obtain
\begin{Thm}\label{main_thm}
Let $G$ be any group and consider any $G$-grading on the upper block triangular matrix algebra $A=UT(n_1,n_2,\ldots,n_t)$ over a field $K$. Suppose that either $\text{char}\,K=0$ or $\text{char}\,K>\dim A$. Then there exists a $G$-graded division algebra on $D=M_n(K)$ and an upper block triangular matrix algebra $B=UT(n_1',n_2',\ldots,n_t')$ endowed with an elementary grading, such that $A\cong B\otimes D$, where the grading on $A$ is given by \eqref{eq_degree}.\qed
\end{Thm}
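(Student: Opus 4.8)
The plan is to obtain the theorem as an immediate corollary of the two results that precede it, so that the only work is to check that their hypotheses line up. First I would confirm that the arithmetic condition on the field is exactly what Gordienko's theorem (the Lemma quoted as Corollary 3.3 of \cite{Gord}) demands: since $A$ is finite-dimensional and either $\text{char}\,K=0$ or $\text{char}\,K>\dim A$, that lemma applies verbatim and yields that the Jacobson radical $J=J(A)$ is a \emph{graded} ideal of $A$. This is the sole point at which the characteristic enters the whole argument; every structural result developed earlier in the paper is characteristic-free and needs only that $J$ be graded.

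With $J$ graded, the hypothesis of Lemma \ref{reflemma} is satisfied, and I would simply invoke it. That lemma concludes that the Valenti--Zaicev conjecture holds for $A$, i.e. (unwinding the statement recorded in the Valenti--Zaicev Conjecture paragraph) that $A$ is graded isomorphic to $UT(n_1',\ldots,n_t')\otimes M_n(K)$ with $M_n(K)$ carrying a graded division structure, the block-triangular factor carrying an elementary grading, and the grading on the tensor product given by \eqref{eq_degree}. Setting $D=M_n(K)$ and $B=UT(n_1',\ldots,n_t')$ then produces exactly the required decomposition $A\cong B\otimes D$, which finishes the argument.

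There is no genuine obstacle at this final step — it is essentially a one-line combination — precisely because all the difficulty has already been discharged upstream. The substantive content sits in Lemma \ref{reflemma}, where one must build the candidate algebra $\mathcal{A}=(UT(n_1,\ldots,n_t),D_1,\eta)$, produce the homogeneous elements $v^{rs}$ together with the chain of weak isomorphisms $\psi_{rs}$, and then verify both that the explicit map $\psi$ is multiplicative (the four bimodule cases) and that it preserves degrees, via \eqref{weakiso} and the calibrated choice of the correcting elements $u_r$. The only thing genuinely worth double-checking here is that those constructions require nothing beyond $J$ being graded, so that the conjunction with Gordienko's theorem is logically legitimate; inspecting the earlier lemmas shows this is indeed the case.
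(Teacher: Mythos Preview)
Your proposal is correct and matches the paper's own argument exactly: the theorem is stated with a \qed\ symbol and no proof, the preceding sentence being ``Combining Gordienko's Theorem and Lemma \ref{reflemma}, we obtain''. The only content is checking that the characteristic hypothesis feeds Gordienko's lemma and that its conclusion ($J$ graded) is the sole hypothesis of Lemma \ref{reflemma}, which you have done.
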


Note that for the particular case where $K$ is algebraically closed of characteristic zero and $G$ is abelian (finite or not), then $J$ is automatically graded (for instance, $J$ is graded by the duality between gradings and action). In this case, the classification of division gradings over matrix algebras is known (see, for example, \cite[Chapter 1]{EldKoch}). In this way, we re-obtain the result of Valenti and Zaicev \cite{VZ2011}. More precisely, we have
\begin{Cor}
Let $G$ be an abelian group, and let $K$ be an algebraically closed field of characteristic zero. Let $U=UT(n_1,n_2,\ldots,n_t)$ be endowed with any $G$-grading. 
Then there exists a subgroup $T\subset G$, a 2-cocycle $\sigma:T\times T\to K^\times$, and a block-triangular algebra $U'=UT(n_1',n_2',\ldots,n_t')$ endowed with an elementary grading (where $n_i=n_i'|T|$, for each $i$), such that $U\cong U'\otimes K^\sigma T$.
\end{Cor}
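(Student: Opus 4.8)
The plan is to obtain the Corollary as a direct specialization of Theorem~\ref{main_thm}, the only additional input being the classical description of division gradings on a matrix algebra over an algebraically closed field. First I would check that the hypotheses of Theorem~\ref{main_thm} are satisfied: since $\mathrm{char}\,K=0$, the theorem applies verbatim and produces a $G$-graded isomorphism $U\cong U'\otimes D$, where $U'=UT(n_1',\ldots,n_t')$ carries an elementary grading, $D=M_n(K)$ is a $G$-graded division algebra, and the grading on the tensor product is the one prescribed in~\eqref{eq_degree}. At this point the only work remaining is to describe the abstract factor $D$ concretely under the extra hypotheses that $G$ is abelian and $K$ is algebraically closed.

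Next I would analyze the internal structure of $D$. Let $T=\{g\in G:D_g\neq 0\}$ be its support. Because $D$ is a graded division algebra, $T$ is a subgroup of $G$, the neutral component $D_e$ is a finite-dimensional division algebra over $K$ and hence equals $K$ (as $K$ is algebraically closed), and for each $t\in T$ left multiplication by any fixed nonzero homogeneous element identifies $D_t$ with $D_e$, so $\dim_K D_t=1$. Consequently $\dim_K D=|T|$, and since $D\cong M_n(K)$ this forces $|T|=n^2$; in particular $T$ is finite. Fixing a nonzero homogeneous $X_t\in D_t$ for each $t\in T$, the products $X_sX_t=\sigma(s,t)X_{st}$ define a map $\sigma\colon T\times T\to K^\times$, and associativity in $D$ forces $\sigma$ to be a $2$-cocycle; thus $D\cong K^\sigma T$ as $G$-graded algebras. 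Finally, since $D$ is the simple algebra $M_n(K)$, the cocycle $\sigma$ is necessarily nondegenerate. This equivalence between abelian division gradings on $M_n(K)$ and nondegenerate twisted group algebras is exactly the classification recorded in \cite[Chapter~1]{EldKoch}, and it is the single external ingredient on which the argument rests.

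To conclude, I would substitute $D\cong K^\sigma T$ into $U\cong U'\otimes D$, obtaining $U\cong U'\otimes K^\sigma T$ with $U'=UT(n_1',\ldots,n_t')$ elementarily graded. The block-size bookkeeping follows from the Kronecker identification $UT(n_1',\ldots,n_t')\otimes M_n(K)\cong UT(n_1'n,\ldots,n_t'n)$, so that $n_i=n_i'n$, where $n=\sqrt{|T|}$ since $|T|=n^2$. The step I expect to carry the genuine mathematical content is the identification of $D$ with a twisted group algebra $K^\sigma T$ together with the verification that the resulting cocycle $\sigma$ is nondegenerate; once Theorem~\ref{main_thm} is in hand, everything else is routine verification of dimensions and block sizes.
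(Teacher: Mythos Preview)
Your approach is correct and essentially identical to the paper's: apply Theorem~\ref{main_thm} and then invoke the classification of abelian division gradings on matrix algebras over an algebraically closed field (the paper's one-line proof cites \cite[Theorem~2.15]{EldKoch}, while you unpack that classification explicitly). Your block-size computation $n_i=n_i'n=n_i'\sqrt{|T|}$ is in fact the correct one; the formula $n_i=n_i'|T|$ in the statement appears to be a typo.
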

\begin{proof}
In this case, a graded division algebra on a matrix algebra is $K^\sigma T$ (for instance, see Theorem 2.15 of \cite{EldKoch}).
\end{proof}
A natural question is if Theorem \ref{main_thm} is true without any restriction on the characteristic of the base field.

\subsection*{Acknowledgments}
The author is grateful to the Referee whose comments improved the exposition of the manuscript and corrected some proofs. This work was completed while the author was visiting Memorial University of Newfoundland (Canada) under the supervision of Professor Yuri Bahturin and Professor Mikhail Kochetov. The author thanks his doctoral advisor, Professor Plamen Koshlukov, from the State University of Campinas (Brazil).

\end{document}